\newtheorem{proposition}{Proposition}[section]
\newtheorem{theorem}{Theorem}[section]%
\newtheorem{lemma}[proposition]{Lemma}
\theoremstyle{remark}
\newtheorem{remark}{Remark}
\newcommand{\eps}{\varepsilon}
\newcommand{\RR}{\mathbb {R}}
\newcommand{\NN}{\mathbb {N}}
\DeclareMathOperator{\y}{\mathbf {y}}
\DeclareMathOperator{\f}{\mathbf{f}}
\DeclareMathOperator{\z}{\mathbf{z}}
\DeclareMathOperator{\id}{I}
\begin{document}
\title[On the stability and null-controllability of a linear infinite  system]{On the stability and null-controllability of an infinite system of linear differential equations}
\author[A. Azamov, G. Ibragimov, K. Mamayusupov, M. Ruziboev]{Abdulla Azamov, Gafurjan Ibragimov, Khudoyor Mamayusupov, \\Marks Ruziboev}
\address[Abdulla Azamov]{Section of Dynamical Systems and Their Applications, V.I.Romanovskiy Institute of Mathematics, Uzbek Academy of Sciences, 4, University street, Olmazor, Tashkent, 100174, Uzbekistan}
\email{abdulla.azamov@gmail.com}
\address[Gafurjan Ibragimov]{Department of Mathematics and Institute for Mathematical Research,
University Putra Malaysia, Malaysia}
\email{ibragimov@upm.edu.my}
\address[Khudoyor Mamayusupov]{Moscow Institute of Physics and Technology, Institutsky lane 9, Dolgoprudny, Moscow region, 141700, Russia\newline The National University of Uzbekistan,  4, University street, Tashkent, 100174, , Uzbekistan}
\email{mamayusupov@phystech.edu}
\address[Marks Ruziboev]{Faculty of Mathematics, University of Vienna, Oskar-Morgnstern Platz 1, Austria}
\email{marks.ruziboev@univie.ac.at}
\thanks{MR would like to thank The FWF for  supporting his research through the Lise Meitner Fellowship FWF M-2816. The authors also thank the referees for useful comments which improved the exposition.}
\subjclass[2010]{Primary 49N05, 93C15}
\maketitle

\begin{abstract}
In this work, the null controllability problem for a linear system in $\ell^2$ is considered, where the matrix of a linear operator describing the system is an infinite matrix with $\lambda\in \RR$ on the main diagonal and 1s above it. We show that the system is asymptotically stable if and only if  $\lambda\le-1$, which shows the fine difference between the finite and the infinite-dimensional systems. When $\lambda\le-1$ we also show that the system is null controllable in large. We also show a dependence of the stability on the norm i.e. the same system considered in $\ell^\infty$ is not asymptotically stable if $\lambda=-1$. 
\end{abstract}

\section{Statement of the problem}\label{sec:sofprob}
Control problems in Banach or Hilbert spaces arise naturally in processes described by partial differential equations (see for example \cite{Lions, CurZwart, Fur,  WWX, AS, ErvZua, BMZ, CMZ, CorXinag} and references therein).  Sometimes it is useful to reduce the control problem for partial differential equations to infinite systems of ODEs \cite{Cher90, Cher, AzRuz, AzBakAkh}. Also, it is of independent interest to consider control systems governed by infinite system as models in Banach spaces. For example in  \cite{SatimovTukht2007, TukhtMamatov2008} control problems for infinite systems are considered. 

A considerable amount of work devoted to  differential game problems for infinite systems in Hilbert spaces (see for example \cite{Idham_IGI_Ask2016, Ibragimov-Al-Kuch2014} and references therein). Optimal strategies for players in suitable classes of strategies have been constructed in  \cite{Ibragimov-ScAs2013}. 

Often it is useful to study finite dimensional approximations of the infinite system, such an approach is taken in \cite{AzRuz, AzBakAkh}. The main difficulty is then to prove that the approximate solutions converge to a solution of the initial control problem. In the above works the authors obtain infinite  linear ODEs, where the right hand has  a diagonal form. Hence it is not difficult to show that finite dimensional approximations converge to the solutions of the original system in a suitable sense.
The proofs suggest that similar results maybe proven for linear systems with block diagonal form under certain mild assumptions. 

In fact, as it is shown in \cite{ZMI} for certain  linear systems with quadratic cost there are  approximation schemes that converge, but the approximating controls do not even stabilize the original system and also the costs does not converge. 

In this work we consider a simple infinite linear controllable system in $\ell^2$. The main feature of the system is that it is an infinite Jordan  block, with $\lambda\in \RR$ on the main diagonal. Therefore, any finite dimensional approximation of the system is asymptotically stable whenever $\lambda<0$, but the infinite system is stable  if and only if $\lambda\le-1$ and when $\lambda>-1$ solutions in certain directions grow exponentially fast. This shows fine difference between finite dimensional and infinite systems. Another main feature of this notes is that using Gramian operators, we give explicit form of control functions that stabilize the system. 

In the rest of this section we formulate the problem and state the main results. In section \ref{sec:proof} we prove global asymptotic stability. In section \ref{sec:control} we show global null-controllability and in Section \ref{sec:disc} we discuss the results and further generalizations.
  
Let $\ell^2=\{\y=(y_1, y_2, \dots )\mid y_n\in \RR, \sum_{n\ge1} y_n^2< \infty\}$. We consider $\ell^2$ with it's natural norm: $\|\y\|_2^2=\sum_{n\ge1} y_n^2$, which turns it into a Hilbert space.
\medskip

Given an infinite system of  ODEs:
\begin{equation}\label{eq:syst1}
\dot{y}_{n}=\lambda y_{n}+y_{n+1},  y_n(0)=y_{n,0},
\end{equation}
where  $\lambda\in\RR$ is a fixed number and  $\y_0=\{y_{n,0}\}_{n\in\NN}\in \ell^2$.
We can rewrite the system in an operator form
\begin{equation}\label{eq:1}
\dot{\y}=A\y, \y(0)=\y_0,
\end{equation}
where $\y_0=\{y_{n,0}\}_{n\in\NN}$ and $A:\ell^2\to \ell^2$ is a linear operator defined by
\[A\y=\{\lambda y_n+y_{n+1}\}_{n\in \NN}.\]

This is an example of an ODE in a Banach space, which is a well studied topic (see for example \cite{Dei, CurZwart}), here we study the stability and control problems. In particular, we construct controls function explicitly.

Observe that $A$ is a bounded linear operator, in fact we have
\[\begin{aligned}
\|A\y\|_2^2=\sum_{n\ge 1} (\lambda y_n+y_{n+1})^2
 \le (1+|\lambda|)^2\|\y\|_2^2.
\end{aligned}\]
Hence, $\|A\|=\sup_{{\|\y\|}_{2}=1}\|A\y\|_2\le 1+|\lambda|$.

Now, it is standard to define $e^{tA}$ as
\[
e^{tA}:=\sum_{n\ge 0}\frac{t^nA^n}{n!},
\]
which is bounded on  $\ell^2$ for every $t\in \RR$. Further, $e^{tA}$ admits all the properties of analogues operator for matrices. In particular, $e^{tA}$ defines a group of operators.
The solution of \eqref{eq:1} can be written in the form
\[\y(t)=e^{tA}\y_0.\]
We also consider the Cauchy problem for non-homogeneous equation
\begin{equation}\label{eq:controlsyst}
\dot{\y}=A\y+\f, \quad\y(0)=\y_0,
\end{equation}
for $\textbf f:\RR\to \ell^2$, $\f \in L^2([0, T], \ell^2)$, i.e. $\|\f\|_{L^2}^2=\int_{0}^T\|\f(t)\|^2_2 dt<+\infty$\footnote{We note that this norm coincides with $\|\f\|_{L^2}^2=\sum_{n\ge 1}\int_{0}^T |f_n(t)|^2dt$ thanks to Beppo-Levi's theorem.}. 

A function $\y:[0, T]\to \ell^2$ defined as 
\[
\y(t)=e^{tA}\y_0 + e^{tA}\int_{0}^te^{-sA}\f(s)ds
\]
is called a mild solution of \eqref{eq:controlsyst} if $\y\in C([0, T], \ell^2)$.  Here the integration is understood componentwise.
For completeness we start with the following.
\begin{proposition}\label{prop:exst}
For every $\f \in L^2([0, T], \ell^2)$ and $\y_0\in \ell^2$ we have $\y\in C([0, T], \ell^2)$.
\end{proposition}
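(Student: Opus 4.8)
The plan is to handle the two summands of $\y(t)$ separately, using throughout the uniform operator bound $\|e^{\pm tA}\|\le e^{t\|A\|}\le e^{TM}=:C$ for $t\in[0,T]$, where $M:=1+|\lambda|\ge\|A\|$; this is immediate from the defining power series and is precisely the place where boundedness of $A$ (as opposed to the unbounded-generator setting) makes life easy. The first summand $t\mapsto e^{tA}\y_0$ is continuous because $t\mapsto e^{tA}$ is norm-continuous: from $e^{tA}-e^{t'A}=e^{t'A}\bigl(e^{(t-t')A}-\id\bigr)$ and $\|e^{hA}-\id\|\le e^{|h|M}-1$ we get $\|e^{tA}\y_0-e^{t'A}\y_0\|_2\le e^{TM}\bigl(e^{|t-t'|M}-1\bigr)\|\y_0\|_2\to0$ as $t'\to t$.

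For the second summand I would first check that the integrand $s\mapsto e^{-sA}\f(s)$ is Bochner integrable on $[0,T]$. It is strongly measurable, being the composition of the norm-continuous map $s\mapsto e^{-sA}$ with the strongly measurable $\f$, and by Cauchy--Schwarz $\int_0^T\|e^{-sA}\f(s)\|_2\,ds\le C\int_0^T\|\f(s)\|_2\,ds\le C\sqrt{T}\,\|\f\|_{L^2}<\infty$. Since each coordinate projection $\y\mapsto y_n$ is a bounded functional, the $n$-th component of the Bochner integral is $\int_0^t\bigl(e^{-sA}\f(s)\bigr)_n\,ds$, so the vector-valued integral agrees with the componentwise integral used in the definition; write $g(t):=\int_0^t e^{-sA}\f(s)\,ds$. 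Then $g\in C([0,T],\ell^2)$: for $t'>t$ one has $\|g(t')-g(t)\|_2\le\int_t^{t'}\|e^{-sA}\f(s)\|_2\,ds\to0$ by absolute continuity of the Lebesgue integral of the $L^1$-function $s\mapsto\|e^{-sA}\f(s)\|_2$; moreover $\sup_{[0,T]}\|g\|_2\le C\sqrt{T}\,\|\f\|_{L^2}$.

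Finally I would combine the pieces: setting $\phi(t):=e^{tA}g(t)$, for $t,t'\in[0,T]$,
\[
\|\phi(t)-\phi(t')\|_2\le\|e^{tA}\|\,\|g(t)-g(t')\|_2+\|e^{tA}-e^{t'A}\|\,\|g(t')\|_2\le C\|g(t)-g(t')\|_2+\bigl(e^{|t-t'|M}-1\bigr)e^{2TM}\sqrt{T}\,\|\f\|_{L^2},
\]
and both terms tend to $0$ as $t'\to t$ by continuity of $g$ and norm-continuity of $t\mapsto e^{tA}$. Hence $\y=e^{tA}\y_0+\phi\in C([0,T],\ell^2)$. No step is a real obstacle; the only points requiring a little care are the uniform bounds on $e^{\pm tA}$ and the identification of the componentwise integral with a genuine $\ell^2$-valued (Bochner) integral, which is what legitimizes viewing $g$, and hence $\y$, as a continuous $\ell^2$-valued path.
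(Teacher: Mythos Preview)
Your proof is correct and follows essentially the same route as the paper: both split $\y(t)-\y(t_0)$ into a homogeneous piece handled by the norm-continuity of $t\mapsto e^{tA}$ and an inhomogeneous piece $e^{tA}g(t)-e^{t_0A}g(t_0)$ decomposed as $e^{tA}\bigl(g(t)-g(t_0)\bigr)+(e^{tA}-e^{t_0A})g(t_0)$, with the integral term controlled via Cauchy--Schwarz and the uniform bound $\|e^{\pm sA}\|\le e^{T\|A\|}$. Your version is in fact a bit more careful than the paper's in justifying Bochner integrability and its agreement with the componentwise integral.
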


The next result is about stability. In this simple setting we can characterize the system completely. We have the following.
\begin{proposition}\label{prop:stability}
Let $\y(t)$ be the solution of  \eqref{eq:syst1} with an initial condition $\y_0\in\ell^2$.
 System \eqref{eq:syst1} is asymptotically stable if and only if $\lambda\le-1$. Moreover for every  $\y_0\in\ell^2$ and for every $t\in\RR$ holds $\|e^{tA}\y_0\|_2\le e^{(1+\lambda)t}\|\y_0\|_2$ .
\end{proposition}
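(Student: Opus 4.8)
The plan is to decompose the operator as $A=\lambda\id+S$, where $S:\ell^2\to\ell^2$ is the left shift $S\y=\{y_{n+1}\}_{n\in\NN}$. Since $\lambda\id$ and $S$ commute, $e^{tA}=e^{\lambda t}e^{tS}$, and everything reduces to understanding $e^{tS}$. First I would record the trivial bound $\|S\|\le 1$ (indeed $\|S\y\|_2^2=\sum_{n\ge 2}y_n^2\le\|\y\|_2^2$), so that, estimating the exponential series term by term, $\|e^{tS}\y_0\|_2\le\sum_{k\ge 0}\frac{t^k}{k!}\|S\|^k\|\y_0\|_2=e^{t}\|\y_0\|_2$ for $t\ge 0$. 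Multiplying by $e^{\lambda t}$ yields the quantitative estimate $\|e^{tA}\y_0\|_2\le e^{(1+\lambda)t}\|\y_0\|_2$ for $t\ge 0$ (this is the range relevant for stability; for $t<0$ the group law only gives the weaker bound $e^{(1-\lambda)|t|}\|\y_0\|_2$).

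For the ``if'' direction, suppose $\lambda\le-1$. If $\lambda<-1$, the estimate above already gives $\|e^{tA}\|\le e^{(1+\lambda)t}\to 0$, so the system is uniformly (indeed exponentially) asymptotically stable. The borderline value $\lambda=-1$ is the only genuine difficulty: there the estimate gives merely $\|e^{tA}\|\le 1$, i.e. Lyapunov stability, and pointwise decay must be proven by hand. Here I would use that the finitely supported sequences are dense in $\ell^2$: if $\y_0$ is supported in $\{1,\dots,N\}$, then $S^k\y_0=0$ for $k\ge N$, so $e^{tS}\y_0=\sum_{k=0}^{N-1}\frac{t^k}{k!}S^k\y_0$ is an $\ell^2$-valued polynomial in $t$, whence $\|e^{tA}\y_0\|_2=e^{-t}\|e^{tS}\y_0\|_2\le e^{-t}P_{\y_0}(t)\to 0$. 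A standard $\eps/3$ argument, using the uniform bound $\|e^{tA}\|\le 1$ to absorb the approximation tail, upgrades this to $\|e^{tA}\y_0\|_2\to 0$ for every $\y_0\in\ell^2$.

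For the ``only if'' direction, suppose $\lambda>-1$; I would exhibit an initial state whose solution blows up. Choose a real number $z$ with $\max(0,-\lambda)<z<1$ (possible precisely because $-\lambda<1$) and put $\y_0=(1,z,z^2,z^3,\dots)\in\ell^2$. A direct computation from the definition of $A$ gives $(A\y_0)_n=\lambda z^{n-1}+z^n=(\lambda+z)z^{n-1}$, i.e. $\y_0$ is an eigenvector of $A$ with eigenvalue $\lambda+z>0$; hence $e^{tA}\y_0=e^{(\lambda+z)t}\y_0$ and $\|e^{tA}\y_0\|_2=e^{(\lambda+z)t}\|\y_0\|_2\to\infty$. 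So the system is not asymptotically stable (not even Lyapunov stable), which closes the equivalence.

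I expect the case $\lambda=-1$ to be the crux: the operator norm $\|e^{tA}\|$ does not tend to $0$ at an exponential rate (this reflects that $1$ belongs to the spectrum, but not the point spectrum, of the shift), so stability cannot be read off from a norm estimate and one must genuinely exploit the locally nilpotent action of $S$ on finitely supported vectors together with a density argument. The remaining pieces -- the norm estimate and the explicit unstable eigenvector -- are routine once the decomposition $A=\lambda\id+S$ is in place.
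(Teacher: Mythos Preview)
Your proposal is correct and follows essentially the same approach as the paper: the decomposition $A=\lambda\id+S$, the norm bound $\|e^{tS}\|\le e^{t}$ yielding the quantitative estimate for $t\ge 0$, and the geometric eigenvector $(1,z,z^2,\dots)$ for the instability when $\lambda>-1$ all match the paper's argument. The only variation is in the borderline case $\lambda=-1$: the paper splits the exponential series $e^{-t}\sum_j \tfrac{t^j}{j!}E^j\y_0$ at an index $N$ chosen so that $\|E^j\y_0\|_2<\eps/2$ for $j\ge N$, whereas you approximate $\y_0$ by finitely supported vectors (on which $S$ is nilpotent) and invoke density together with the uniform bound $\|e^{tA}\|\le 1$; these are two packagings of the same ``tail is small'' idea, and your version is arguably cleaner.
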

%
Let $\rho>0$ be fixed. A control function $\textbf f:\RR\to \ell^2$ is called admissible if 
\[\|\f\|_{L^2}^2=\int_{0}^T\|\f(t)\|^2_2 dt\le \rho^2.\]
We say that the system  \eqref{eq:controlsyst} is \textit{null-controllable} from $\y_0\in\ell^2$  an admissible control $ \textbf f:\RR\to \ell^2$  and $T=T(f)\in\RR$ such that the solution of  \eqref{eq:controlsyst}  satisfies  $\y(T)=0$. 

We say that the system  \eqref{eq:controlsyst} is \textit{locally null-controllable} if there exists $\delta=\delta(\rho)>0$ such that \eqref{eq:controlsyst} is null-controllable from any $\y_0\in\ell^2$ with $\|\y_0\|\le \delta$.

We say that the system  \eqref{eq:controlsyst} is \textit{globally null-controllable} if it is null-controllable from any $\y_0\in\ell^2$.

The main result of this notes is the following 
\begin{theorem}\label{main}
\begin{itemize}
\item[(i)]The system \eqref{eq:controlsyst} is locally null-controllable for every $\lambda\in \RR$.  
\item[(ii)] If  $\lambda\le-1$,  then system \eqref{eq:controlsyst} is globally null-controllable. 
\item[(iii)] If $\lambda<-1$ the systems can be transferred from an initial point $\y_0\in\ell^2$ into the origin for time $\tau\ge {\|\y_0\|^4_2}/{\kappa\rho^4}$, where $\kappa$ is a constant independent of $y_0$.
\end{itemize} 
\end{theorem}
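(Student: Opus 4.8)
The plan is to steer the system with the control supplied by the controllability Gramian, reduce the whole statement to one invertibility fact and one norm estimate for that Gramian, and then read off (i)--(iii) by choosing the time horizon. Since in \eqref{eq:controlsyst} the control enters through the identity, set, for $T>0$,
\[
Q_T:=\int_0^T e^{sA}\bigl(e^{sA}\bigr)^{*}\,ds=\int_0^T e^{sA}e^{sA^{*}}\,ds,
\]
a bounded, self-adjoint, nonnegative operator on $\ell^2$ (the integrand is norm-continuous and $\le e^{2(1+|\lambda|)s}$). Granting that $Q_T$ is boundedly invertible, put $\eta:=Q_T^{-1}e^{TA}\y_0$ and take
\[
\f(s):=-\bigl(e^{(T-s)A}\bigr)^{*}\eta=-e^{(T-s)A^{*}}\eta,\qquad s\in[0,T].
\]
Since $t\mapsto e^{tA}$ is norm-continuous, $\f$ is continuous, hence in $L^2([0,T],\ell^2)$; by Proposition~\ref{prop:exst} the corresponding mild solution lies in $C([0,T],\ell^2)$, and with the substitution $u=T-s$ one gets $\y(T)=e^{TA}\y_0+\int_0^T e^{(T-s)A}\f(s)\,ds=e^{TA}\y_0-Q_T\eta=0$. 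A short computation gives the cost
\[
\|\f\|_{L^2}^{2}=\langle Q_T\eta,\eta\rangle=\bigl\langle e^{TA}\y_0,\,Q_T^{-1}e^{TA}\y_0\bigr\rangle\le\|Q_T^{-1}\|\,\bigl\|e^{TA}\y_0\bigr\|_2^{2},
\]
so the theorem reduces to (a) bounded invertibility of $Q_T$ with a good bound on $\|Q_T^{-1}\|$, and (b) the estimate $\|e^{TA}\y_0\|_2\le e^{(1+\lambda)T}\|\y_0\|_2$, which is exactly Proposition~\ref{prop:stability}.

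For (a) I would bound $\langle Q_Tv,v\rangle=\int_0^T\|e^{sA^{*}}v\|_2^{2}\,ds$ from below for unit $v$. Since $e^{tA}$ is a group, $\|e^{sA^{*}}v\|_2\ge\|v\|_2/\|e^{-sA^{*}}\|=\|v\|_2/\|e^{-sA}\|\ge e^{(1+\lambda)s}\|v\|_2$ by Proposition~\ref{prop:stability} applied at time $-s$; hence
\[
Q_T\ \ge\ g_\lambda(T)\,\id,\qquad g_\lambda(T):=\int_0^T e^{2(1+\lambda)s}\,ds\ >\ 0
\]
for every $\lambda\in\RR$ and every $T>0$ (here $g_{-1}(T)=T$ and $g_\lambda(T)=\tfrac{1-e^{-2|1+\lambda|T}}{2|1+\lambda|}$ for $\lambda<-1$). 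Thus $Q_T$ is invertible with $\|Q_T^{-1}\|\le g_\lambda(T)^{-1}$, and together with (b) the cost bound becomes
\[
\|\f\|_{L^2}^{2}\ \le\ C_\lambda(T)\,\|\y_0\|_2^{2},\qquad C_\lambda(T):=\frac{e^{2(1+\lambda)T}}{g_\lambda(T)} .
\]

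The three parts then follow by choosing $T$. For (i), fix $T=1$: as $C_\lambda(1)<\infty$ for every $\lambda$, the control above is admissible whenever $\|\y_0\|_2\le\delta:=\rho/\sqrt{C_\lambda(1)}$. For (ii), when $\lambda\le-1$ one has $C_\lambda(T)\to0$ as $T\to\infty$ (indeed $C_{-1}(T)=1/T$, and the decay is exponential for $\lambda<-1$), so for any $\y_0\in\ell^2$ a sufficiently large $T$ makes the control admissible. For (iii), when $\lambda<-1$, solving $C_\lambda(T)\|\y_0\|_2^{2}\le\rho^{2}$ explicitly shows it suffices that $T\ge\frac{1}{2|1+\lambda|}\ln\!\bigl(1+\tfrac{2|1+\lambda|}{\rho^{2}}\|\y_0\|_2^{2}\bigr)$; since $C_\lambda$ decreases in $T$, every larger horizon also works, and a crude bound on this logarithm yields a sufficient time of the form $\|\y_0\|_2^{4}/(\kappa\rho^{4})$ with $\kappa$ depending only on $\lambda$. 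I expect the one genuinely non-routine step to be the lower bound $Q_T\ge g_\lambda(T)\,\id$; everything else — measurability of $\f$, the identity $\y(T)=0$, the cost formula, and the choices of $T$ — is bookkeeping once Propositions~\ref{prop:exst} and \ref{prop:stability} are in hand. The group/adjoint argument above seems the cleanest route to that bound, since $s\mapsto\|e^{sA^{*}}v\|_2$ need not be monotone and a direct termwise estimate of $e^{sA^{*}}v$ is unpleasant.
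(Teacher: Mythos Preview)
Your overall strategy is the same as the paper's (the Gramian control; after the substitution your $\f$ coincides with the paper's $\f_0$ and the cost identities match), but the step you flag as ``the one genuinely non-routine step'' contains a real error. You claim
\[
\|e^{sA^{*}}v\|_{2}\ \ge\ \frac{\|v\|_{2}}{\|e^{-sA}\|}\ \ge\ e^{(1+\lambda)s}\|v\|_{2},
\]
invoking Proposition~\ref{prop:stability} ``at time $-s$'', i.e.\ $\|e^{-sA}\|\le e^{-(1+\lambda)s}$. That inequality is false for $s>0$: the bound $\|e^{tA}\|\le e^{(1+\lambda)t}$ is only valid for $t\ge 0$ (the proof uses $\|e^{tE}\|\le e^{t}$, which needs $t\ge0$); the ``for every $t\in\RR$'' in the proposition's statement is a slip. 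Concretely, for $\theta\in(0,1)$ the vector $\Theta_\theta=(1,\theta,\theta^2,\dots)\in\ell^2$ satisfies $A\Theta_\theta=(\lambda+\theta)\Theta_\theta$, hence $\|e^{-sA}\Theta_\theta\|_2=e^{-(\lambda+\theta)s}\|\Theta_\theta\|_2$; letting $\theta\to 0^+$ gives $\|e^{-sA}\|\ge e^{-\lambda s}>e^{-(1+\lambda)s}$ for every $s>0$. So your lower bound $Q_T\ge g_\lambda(T)\,\id$ with $g_\lambda(T)=\int_0^T e^{2(1+\lambda)s}\,ds$ is not justified.

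The damage is localized but real. With the honest estimate $\|e^{-sA}\|\le e^{(1-\lambda)s}$ you still get $Q_T\ge\bigl(\int_0^T e^{2(\lambda-1)s}ds\bigr)\id$, which suffices for (i) and, combined with $\|e^{TA}\y_0\|_2\le e^{(1+\lambda)T}\|\y_0\|_2$, also for (ii) and (iii) when $\lambda<-1$. But for $\lambda=-1$ this corrected bound gives $\|Q_T^{-1}\|$ bounded away from $0$ and $\|e^{TA}\y_0\|_2$ merely bounded, so your $C_{-1}(T)$ does \emph{not} tend to $0$; your claimed $C_{-1}(T)=1/T$ rested precisely on the false inequality. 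The paper handles $\lambda=-1$ in one of two ways you can adopt: either work with the other Gramian $W(\tau)=\int_0^\tau e^{-sA}e^{-sA^{*}}ds$ (your cost $\langle e^{TA}\y_0,Q_T^{-1}e^{TA}\y_0\rangle$ equals $\langle\y_0,W(T)^{-1}\y_0\rangle$, and now the needed inequality $\|e^{-tA^{*}}\y\|_2\ge e^{-(1+\lambda)t}\|\y\|_2$ follows from the \emph{correct} direction $\|e^{tA^{*}}\|\le e^{(1+\lambda)t}$, $t\ge0$, giving $W(\tau)\ge\tau\,\id$ when $\lambda=-1$), or use the two-stage argument: let the free flow run until $\|e^{tA}\y_0\|_2$ is small (asymptotic stability, Proposition~\ref{prop:stability}) and then apply your local result (i).
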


Notice that we didn't aim to state the results in the most general form. Also, in  Proposition \ref{prop:stability} for $-1<\lambda<0$ we construct solutions going to $\infty$ as $t\to+\infty$, i.e. $0$ is not Lyapunov stable. Thus jump when passing from $\lambda=-1$ somewhat unusual. But apparently it is due to the structure of $\ell^2$ and very special structure of $A=\lambda \id+E$, i.e.  the shift operator $E:\ell^2\to \ell^2$ is weakly contracting ($E^n\y\to 0$ as $n\to \infty$ for all $\y\in\ell^2$) in this case. The proofs show that analogues results are true  for all  $\ell^p$ spaces with  $1\le p<+\infty$. However, in $\ell^\infty$ the trivial solution $0$ is Lyapunov stable, but it is not asymptotically stable when $\lambda=-1$, see subsection \eqref{sec:stab}.

\section{Asymptotic stability}\label{sec:proof}

We start this section with the proof of Proposition \ref{prop:exst}. 
\begin{proof}[Proof of Proposition \ref{prop:exst}]
For any $T\in\RR$, $t_0,t\in[0, T]$ and $y_0\in\ell^2$  we have
\begin{equation}\label{eq:et-t0}
\|e^{tA}-e^{t_0A}\|_2\le \|e^{t_0A}\|_2\cdot \|e^{(t-t_0)A}-\id\|_2\le |t-t_0|\cdot\|A\|_2\cdot e^{2T\|A\|_2},
\end{equation}
where in the last inequality we have used the definition of $e^{tA}$ and $\|e^{tA}\|_2\le e^{|t|\cdot \|A\|_2}$. For any $\f \in L^2([0, T], \ell^2)$ and $0\le t_0\le t\le T$ we have
\begin{equation}\label{eq:intt0-t}
\begin{aligned}
\left\|\int_{t_0}^te^{-sA}\f(s)ds\right\|_2 &\le\int_{t_0}^t\|e^{-sA}\|_2\cdot \|\f(s)\|_2ds\\
 &\le\left(\int_{t_0}^t e^{2s \cdot \|A\|_2}\right)^{1/2}\left(\int_{t_0}^t\|\f(s)\|_2^2ds\right)^{1/2}\\
 &\le |t-t_0|^{1/2}e^{t\cdot \|A\|_2}\|\f\|_{L^2},
 \end{aligned}
\end{equation}
where in the second inequality the Cauchy-Schwarz inequality is used.
We have 
\begin{equation}\label{difference}
\begin{aligned}
\|\y(t)-\y(t_0)\|_2\le &\|e^{tA}-e^{t_0A}\|_2\cdot\|\y_0\|_2 \\ &+\|e^{tA}\int_{0}^te^{-sA}\f(s)ds-e^{t_0A}\int_{0}^{t_0}e^{-sA}\f(s)ds\|_2.
\end{aligned}
\end{equation}
The first term on the right hand side of the inequality tends to $0$ when $t\to t_0$ by \eqref{eq:et-t0}. We will show that the second term also tends to $0$ as $t$ approaches $t_0$. Without loss of generality, assume $t>t_0$ then the second term of \eqref{difference} is bounded by
\[
\|e^{tA}\|_2\cdot \|\int_{t_0}^te^{-sA}\f(s)ds\|_2+ \|e^{tA}-e^{t_0A}\|_2\cdot \|\int_{0}^{t_0}e^{-sA}\f(s)ds\|_2,
\]
which tends to $0$ by   \eqref{eq:intt0-t}. The proof is finished.
\end{proof}

\subsection{Stability of the system} \label{sec:stab}
Here we give necessary and sufficient condition for the asymptotic stability of the system  in $\ell^2$. Recall that the system  \eqref{eq:syst1} is called globally asymptotically stable if $\lim_{t\to +\infty} \y(t)=0$  for the solution $\y(t)$ of  \eqref{eq:syst1} with any initial condition $\y_0\in\ell^2$.

\begin{proof}[Proof of Proposition \ref{prop:stability}] We  write $A=\lambda \id+ E$, where
$\id$ is the identity map, and $E:\ell^2\to \ell^2$ is the shift map, i.e. $[Ef]_i=f_{i+1}$. Then we have $e^{tA}=e^{\lambda t}e^{tE}$. Now, we obtain item (i) directly:
\[
\|\y(t)\|_2\le \|e^{tA}\|\cdot \|\y_0\|_2=e^{\lambda t}\|e^{tE}\|\cdot \|\y_0\|_2\le e^{(\lambda+1) t}\cdot \|\y_0\|_2.
\]
If $\lambda<-1$ the latter inequality implies $\lim_{t\to +\infty}\y(t)=0$.   

If $\lambda=-1$ then the above argument doesn't imply the desired conclusion. Thus we proceed as follows.  Observe that 

\begin{equation}\label{eq:etE}
    e^{tE}= \begin{pmatrix}
          1 & t & \frac{t^2}{2!} &\dots &\frac{t^k}{k!} &\dots \\
          0 & 1 &     t          &\dots &\frac{t^{k-1}}{(k-1)!} &\dots \\
          0 & 0 &     1          &\dots & \frac{t^{k-2}}{(k-2)!} &\dots \\
          \vdots& \vdots& \vdots  &\ddots&  \vdots &\ddots
    \end{pmatrix}.
\end{equation}

Then for any $\z\in\ell^2$ with $\|\z\|_2=1$ and for the solution $\y(\cdot)$  started from $\y_0=(y_{10}, y_{20}, \dots)\in\ell^2$ we obtain 
\begin{equation}\label{eq:ytz}
\langle\y(t), \z\rangle = e^{-t} \langle\sum_{j\ge 0}\frac{t^j}{j!}E^j\y_0, \z\rangle= e^{-t}\sum_{j\ge 0} \frac{t^j}{j!} \langle E^j\y_0, \z\rangle.
\end{equation}
From the definition of $E$ we have  $\|E^j\y_0\|_2\le  \|\y_0\|_2$ for all $j\in\NN_0$  and  $\|E^j\y_0\|_2\to 0$ as $j\to \infty$. Thus for any $\eps>0$ there exists $N=N(\y_0)\in\NN_0$ such that $\|E^j\y_0\|_2\le \eps/2$ for all $j\ge N$. Fixing such an $N$ and using $|\langle E^j\y_0, \z\rangle| \le \|E^j\y_0\|_2\le \|\y_0\|_2$ for $j\in \NN$ from \eqref{eq:ytz} we obtain
\begin{equation}\label{eq:|ytz|}
\begin{aligned}
|\langle\y(t), \z\rangle|&\le e^{-t}\sum_{j=0}^N \frac{t^j}{j!}\|E^j\y_0\|_2 + e^{-t}\sum_{j=N+1}^\infty \frac{t^j}{j!} \|E^j\y_0\|_2\\
&\le \|\y_0\|_2e^{-t} \sum_{j=0}^N \frac{t^j}{j!}+ \frac{\eps}2e^{-t}\sum_{j=0}^\infty \frac{t^j}{j!}\le \|\y_0\|_2C_Ne^{-t/2}+\frac{\eps}2.
\end{aligned}
\end{equation}
Notice that the choice of $N$ and hence $C_N$ is independent of $t$. Therefore,  \eqref{eq:|ytz|} implies that there exists $ t(\eps)>0$ such that $\|\y_0\|_2C_Ne^{-t/2}\le \eps/2$ for all $t\ge t(\eps)$. Finally,   taking $\z=\y(t)/\|\y(t)\|_2$ in \eqref{eq:|ytz|} results to 
\[
\|\y(t)\|_2\le \eps \text{ for all } t\ge t(\eps).
\]
This finishes the proof of item (ii). 

Now we show item (iii). Suppose that $\lambda>-1$. 
Since for  $\lambda>0$ the system \eqref{eq:syst1}  is  not stable, it suffices to consider the case $-1<\lambda\le 0$. 
Let  $\theta\in(0, 1)$ and $\Theta=(1, \theta, \theta^2, \theta^3, \dots )$. Obviously, $\Theta\in\ell^2$ and $e^{tE}\Theta=e^{t\theta}\Theta$. Since 
$-1<\lambda\le0$ if we let $\theta= -\lambda+\frac{|1+\lambda|}{2}\in (0, 1)$ then as $t\to +\infty$ one gets
\begin{equation}\label{eq:etatheta}
\|e^{tA}\Theta\|_2=e^{t\lambda}\|e^{tE}\Theta\|_2=e^{t|1+\lambda|/2}\|\Theta\|_2\to +\infty.
\end{equation}
This implies that if  $\lambda>-1$ then \eqref{eq:syst1} is not stable.
This completes the proof. 
\end{proof}

\begin{remark}
Consider the system $\dot\y=A\y$, $\y(0)=\y_0\in\ell^\infty$, where 
\[
\ell^\infty=\{\y=(y_1, y_2, \dots)\mid \sup_{n\in\NN}|y_n|<\infty\}.
\]
Then $\textbf{e}=(1,1,1\dots)\in\ell^\infty$ is an eigenvector of $e^{tE}$ corresponding to the eigenvalue $e^{t}$. Thus $0$ is Lyapunov stable but it is not asymptotically stable. 
\end{remark}

\section{Null-controllability}\label{sec:control}
Here we show that  system \eqref{eq:controlsyst} is null controllable.

We start with  a standard lemma from operator theory, which will be useful below. 
\begin{lemma}\label{lem:inv}
Let $L:\mathcal H\to \mathcal H$ be a self adjoint operator defined on a Hilbert space $(\mathcal H, \|\cdot\|)$. Assume that
there exists $\kappa>0$ such that $\|Lx\|\ge \kappa\|x\|$ for all $x\in L$. Then  $L$ is invertible and $\|L^{-1}\|\le \kappa^{-1}$.
\end{lemma}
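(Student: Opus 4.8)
The statement is the standard fact that a self-adjoint operator bounded below is boundedly invertible. The plan is to prove this in three stages: injectivity, dense range, and closed range, and then extract the norm bound.

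\emph{Step 1 (injectivity and the norm bound on the inverse).} The hypothesis $\|Lx\| \ge \kappa\|x\|$ for all $x \in \mathcal H$ immediately gives that $Lx = 0$ implies $x = 0$, so $L$ is injective. Moreover, writing $y = Lx$ on the range of $L$, the same inequality reads $\|x\| \le \kappa^{-1}\|y\|$, i.e. $\|L^{-1}y\| \le \kappa^{-1}\|y\|$ for all $y$ in the range; so once we know the range is all of $\mathcal H$, the bound $\|L^{-1}\| \le \kappa^{-1}$ follows for free.

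\emph{Step 2 (the range is dense).} Suppose $y \perp \overline{\mathrm{range}(L)}$. Then for every $x \in \mathcal H$ we have $0 = \langle Lx, y\rangle = \langle x, Ly\rangle$ by self-adjointness, and taking $x = Ly$ gives $\|Ly\|^2 = 0$, hence $Ly = 0$, hence $y = 0$ by Step 1. Thus $\mathrm{range}(L)$ is dense.

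\emph{Step 3 (the range is closed).} Let $y_n = Lx_n \to y$ in $\mathcal H$. The lower bound gives $\|x_n - x_m\| \le \kappa^{-1}\|y_n - y_m\|$, so $(x_n)$ is Cauchy; let $x_n \to x$. Since $L$ is bounded (it is self-adjoint, hence by the Hellinger--Toeplitz theorem continuous, but in our applications $L$ is manifestly bounded anyway), $Lx_n \to Lx$, so $y = Lx \in \mathrm{range}(L)$. Combining Steps 2 and 3, $\mathrm{range}(L) = \overline{\mathrm{range}(L)} = \mathcal H$, so $L$ is a bijection, its inverse is defined on all of $\mathcal H$, and by Step 1 it satisfies $\|L^{-1}\| \le \kappa^{-1}$.

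There is no real obstacle here; the only point requiring a moment's care is that one genuinely needs both density (Step 2, where self-adjointness is used) and closedness (Step 3, where only the lower bound is used) to conclude surjectivity — neither alone suffices. I would present the argument essentially as above, perhaps remarking that $L$ being bounded is automatic for a self-adjoint operator defined on all of $\mathcal H$.
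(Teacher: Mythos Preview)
Your proof is correct and is the standard argument. The paper itself does not supply a proof of this lemma at all: it introduces it as ``a standard lemma from operator theory'' and simply states it, so there is nothing to compare against beyond noting that your three-step argument (injectivity from the lower bound, density of the range from self-adjointness via $(\operatorname{ran} L)^\perp = \ker L^\ast = \ker L$, closedness of the range from the lower bound plus continuity) is exactly the textbook route one would expect.
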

To prove controllability we use Gramian operators and prove an observability inequality. For $\tau\in \RR$ define 
\[
W(\tau)=\int_0^\tau e^{-sA}\cdot e^{-sA^\ast}ds,
\]
where  and $A^\ast$ is the adjoint of $A$ in $\ell^2$. The following lemma is the main technical tool 
\begin{lemma}
For every $\tau\in \RR$ the operator $W(\tau)$ is bounded, self adjoint, positive definite and invertible. Moreover, there exists $\kappa >0$ such that $\|W(\tau)\y\|_2\ge \kappa\|\y\|_2$ for any $\y\in\ell^2$. 
\end{lemma}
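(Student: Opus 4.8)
The plan is to read off boundedness, self-adjointness and positivity directly from the definition of $W(\tau)$, and to reduce the quantitative estimate to an observability bound for the flow of $-A^\ast$. Fix $\tau>0$. Since $t\mapsto e^{tA}$ is norm-continuous (cf. \eqref{eq:et-t0}), so is $s\mapsto e^{-sA}e^{-sA^\ast}$ on $[0,\tau]$, and $\|e^{-sA}e^{-sA^\ast}\|_2\le e^{2s\|A\|_2}$; hence the integral defining $W(\tau)$ converges in the Banach algebra of bounded operators on $\ell^2$ and $\|W(\tau)\|_2\le\int_0^\tau e^{2s\|A\|_2}\,ds<\infty$, which gives boundedness. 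Since $(e^{-sA})^\ast=e^{-sA^\ast}$ and $(e^{-sA^\ast})^\ast=e^{-sA}$, each integrand $e^{-sA}e^{-sA^\ast}$ is self-adjoint, hence so is $W(\tau)$; and for every $\y\in\ell^2$
\[
\langle W(\tau)\y,\y\rangle=\int_0^\tau\langle e^{-sA^\ast}\y,\, e^{-sA^\ast}\y\rangle\,ds=\int_0^\tau\|e^{-sA^\ast}\y\|_2^2\,ds\ge0 ,
\]
so $W(\tau)$ is positive semidefinite.

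The only substantive step is to bound this last integral from below, and the key observation is that $\|e^{-sA^\ast}\y\|_2$ cannot be too small: since $e^{sA^\ast}e^{-sA^\ast}=\id$ and $\|e^{sA^\ast}\|_2\le e^{s\|A^\ast\|_2}=e^{s\|A\|_2}$, we get $\|\y\|_2\le e^{s\|A\|_2}\|e^{-sA^\ast}\y\|_2$, i.e. $\|e^{-sA^\ast}\y\|_2\ge e^{-s\|A\|_2}\|\y\|_2$ for every $s\ge0$. Substituting this,
\[
\langle W(\tau)\y,\y\rangle\ge\|\y\|_2^2\int_0^\tau e^{-2s\|A\|_2}\,ds=\kappa\,\|\y\|_2^2 ,\qquad \kappa:=\frac{1-e^{-2\tau\|A\|_2}}{2\|A\|_2}>0 ,
\]
so in particular $W(\tau)$ is positive definite. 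One can obtain a sharper $\kappa$ by using $A^\ast=\lambda\,\id+E^\ast$, where the right shift $E^\ast$ is an isometry of $\ell^2$, so $\|e^{sA^\ast}\|_2\le e^{(1+\lambda)s}$ and $\|e^{-sA^\ast}\y\|_2\ge e^{-(1+\lambda)s}\|\y\|_2$; either version suffices here.

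Finally, the Cauchy-Schwarz inequality gives $\|W(\tau)\y\|_2\,\|\y\|_2\ge\langle W(\tau)\y,\y\rangle\ge\kappa\|\y\|_2^2$, hence $\|W(\tau)\y\|_2\ge\kappa\|\y\|_2$ for all $\y\in\ell^2$, and Lemma \ref{lem:inv} then yields that $W(\tau)$ is invertible with $\|W(\tau)^{-1}\|_2\le\kappa^{-1}$. I do not expect a real obstacle here: the one-line duality estimate $\|\y\|_2\le\|e^{sA^\ast}\|_2\|e^{-sA^\ast}\y\|_2$ is exactly what avoids having to control the lower-triangular operator $e^{-sA^\ast}$ entrywise, which would be the naive and messier route. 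The constant $\kappa$ produced this way depends on $\tau$ and $\lambda$; this dependence is harmless for invertibility, and the sharper bound above (with $1+\lambda\le0$ when $\lambda\le-1$) is what one feeds into the refined time estimate of Theorem \ref{main}(iii).
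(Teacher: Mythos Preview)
Your proof is correct and takes a cleaner route than the paper's. For boundedness and self-adjointness the paper works entrywise: it writes out the matrix entries of $e^{-tE}e^{-tE^\ast}$, checks absolute convergence of the resulting double series, and swaps summations; you bypass this by treating $s\mapsto e^{-sA}e^{-sA^\ast}$ as a norm-continuous curve of self-adjoint operators and integrating in the operator norm, which is both shorter and applies to any bounded $A$. For the coercivity bound the paper does a small-time perturbation: it writes $e^{-tE}e^{-tE^\ast}=\id-t(E+E^\ast)+o(t)$, integrates over $[0,\varepsilon]$, and obtains $\kappa^2=\frac{1-3\varepsilon}{-2\lambda}(e^{-2\lambda\varepsilon}-1)$, explicitly independent of $\tau$ once $\tau\ge\varepsilon$. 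Your duality estimate $\|e^{-sA^\ast}\y\|_2\ge\|e^{sA^\ast}\|_2^{-1}\|\y\|_2$ is the more natural argument and yields a $\kappa(\tau)$ that is increasing in $\tau$, hence also uniformly bounded below for $\tau$ away from $0$; so nothing is lost for the later applications. The sharper variant you note, using $\|e^{sA^\ast}\|_2\le e^{(1+\lambda)s}$, is precisely the inequality the paper itself invokes later in the proof of Theorem~\ref{main} when $\lambda<-1$, so your lemma already contains that refinement. In short: the paper's entrywise and perturbative computations make the Gramian structure explicit but are not needed; your abstract argument is tighter and generalizes verbatim to any bounded generator on a Hilbert space.
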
 

\begin{proof}
One can easily verify that $E^\ast\f=(0, \f)=(0, f_1, f_2, \dots)$.  Then $e^{tA}e^{tA^\ast}=e^{2t\lambda}e^{tE}\cdot e^{tE^\ast}$. Further,
\begin{equation}\label{eq:|Wxy|}
    \begin{aligned}
& |\langle W(\tau)\y, \z\rangle |\le\| \int_0^\tau e^{-2t\lambda}e^{-tE}\cdot e^{-tE^\ast}\y dt\|_2\cdot \|\z\|_2 \le\\ 
& \le  \int_0^\tau e^{2t(1-\lambda)} dt \cdot\|\y\|_2\cdot \| \z\|_2\le M(\tau)\cdot \|\y\|_2\cdot \| \z\|_2,
 \end{aligned}
 \end{equation}
 where the constant $M(\tau)$ depends only on $\tau$.

Let $e_{ij}(t)$, $i,j\in \NN$, denote an element of $e^{tE}\cdot e^{tE^\ast}$. For $\y,\z\in\ell^2$ we have
\begin{equation}\label{eq:Wxy}
   \langle W(\tau)\y, \z\rangle = \sum_{i=1}^\infty\sum_{j=1}^\infty\int_0^\tau e^{-2t\lambda}e_{ij}(-t)y_jz_idt.
\end{equation}
By \eqref{eq:|Wxy|} the right hand side of \eqref{eq:Wxy} is absolutely convergent. Thus,
\begin{equation*}
   \langle W(\tau)\y, \z\rangle = \sum_{j=1}^\infty\sum_{i=1}^\infty\int_0^\tau e^{-2t\lambda}e_{ij}(-t)y_jz_idt =  \langle \y, W(\tau)\z\rangle.
\end{equation*}
This implies that $W(\tau)$ is self adjoint for every $\tau\in\mathbb R$.

Notice that $e^{tE^\ast}$ is just the transpose of $e^{tE}$. Therefore, by \eqref{eq:etE} for $i,j\in \NN$ we have
\begin{equation*}
e_{ij}(t)=\sum_{m=|i-j|}^\infty\frac{t^m}{m!}\cdot\frac{t^{m-|i-j|}}{(m-|i-j|)!},
\end{equation*}
which implies that both of the series
\begin{equation*}
\sum_{i=1}^\infty e_{ij}(-t)y_jz_i \quad\text{and}\quad  \sum_{j=1}^\infty\sum_{i=1}^\infty e_{ij}(-t)y_jz_i
\end{equation*}
converge uniformly in $[0, \tau]$, hence
\begin{equation}\begin{aligned}\label{eq:Wyz}
  & \langle W(\tau)\y, \z\rangle = \sum_{j=1}^\infty\sum_{i=1}^\infty\int_0^\tau e^{-2t\lambda}e_{ij}(-t)y_jz_idt \\
   &=\int_0^\tau\sum_{j=1}^\infty\sum_{i=1}^\infty e^{-2t\lambda}e_{ij}(-t)y_jz_idt =\int_0^\tau e^{-2t\lambda}\langle  e^{-tE^\ast}\y,e^{-tE^\ast}\z\rangle dt.
\end{aligned}
\end{equation}

The above equation immediately implies that $\langle W(\tau)\y, \y \rangle>0$ for every $\y \neq 0$ i.e. $W(\tau)$ is positive definite. In \eqref{eq:Wyz} we have showed that we can take integration out of the scalar product $\langle W(\tau)\y, \z\rangle$. We will use this property several times below.

For every $\varepsilon\in [0, \tau]$
\[
\langle W(\tau)\y, \y \rangle \ge\int_0^{\varepsilon} e^{-2t\lambda} \langle e^{-tE^\ast}\y,e^{-tE^\ast}\y\rangle dt.
\]
Now we look at the operator $e^{-tE}\cdot e^{-tE^\ast}$. Note that $EE^\ast=\id$, we have
\begin{equation}\label{eq:expet}
e^{-tE}\cdot e^{-tE^\ast}=\sum_{n=0}^\infty\frac{t^{2n}}{(n!)^2}\id+ \sum_{n=0}^\infty \sum_{m=n+1}^\infty \frac{(-t)^{n+m}}{n!m!}(E^{m-n}+{(E^\ast)}^{m-n}).
\end{equation}
It follows that for sufficiently small $\varepsilon>0$ and $t\in(0, \eps)$ we have
\[
e^{-tE}\cdot e^{-tE^\ast}=\id-t(E+E^\ast)+o(t),
\]
where $o(t)$ is a linear operator whose $\ell^2$ norm is $o(t)$ in the usual sense. Finally,
\begin{equation}\begin{aligned}\label{eq:weakes}
\int_0^\varepsilon e^{-2t\lambda} \langle e^{-tE}\cdot e^{-tE^\ast}\y, \y\rangle dt=\int_0^\varepsilon e^{-2t\lambda} \langle (\id -t(E+E^\ast)+o(t))y, y\rangle dt\\
>(1-3\varepsilon)\|y\|_2^2\int_0^\varepsilon e^{-2t\lambda}dt=\frac{1-3\varepsilon}{-2\lambda}(e^{-2\lambda\varepsilon}-1)\|y\|_2^2,
\end{aligned}
\end{equation}
where we used $\langle E\y, \y \rangle\le \|y\|_2^2$.  This proves
\begin{equation}\label{eq:kappa}
    \|W(\tau)\y\|_2\ge \kappa\|\y\|_2, \text{ with } \kappa^2=\frac{1-3\varepsilon}{-2\lambda}(e^{-2\lambda\varepsilon}-1)>0.
\end{equation}
Thus Lemma \ref{lem:inv} is applicable and implies that  $W(\tau)$ is invertible for every $\tau >0$ and $W^{-1}(\tau):\ell^2\to \ell^2$ is a bounded linear operator with the norm $\|W(\tau)^{-1}\|\le \kappa^{-1}$, where  $\kappa$ is independent of $\tau$.
\end{proof}
Now we are ready to prove Theorem \ref{main}. 

\begin{proof}[Proof of Theorem \ref{main}] 
Below we assume that $\rho>0$ and the set of admissible control is defined as in Section \ref{sec:sofprob}. 
Recall that $\y(t)=e^{tA}\y_0 + e^{tA}\int_{0}^te^{-sA}\f(s)ds$  is the unique solution of  system \eqref{eq:controlsyst} with an initial state $\y(0)=\y_0$.

We look for a solution of the control problem in the form
\begin{equation}\label{eq:contol}
    \f_0(t)=-e^{-tA^\ast}\cdot W^{-1}(\tau)\y_0 \quad\text{for every}\quad \y_0\in \ell^2, \tau\in\RR^+.
\end{equation}

We  show that $\int_0^\tau e^{-sA}\f_0(s)ds=-\y_0$ for every fixed $\tau\in \RR^+$. Indeed, by \eqref{eq:Wyz} we have
\begin{equation}\label{eq:int=y_0}
-\int_0^\tau e^{-tA}\f_0dt=
\int_0^\tau e^{-tA}e^{-tA^\ast}dt\cdot W^{-1}(\tau)\y_0= \y_0.
\end{equation}
It remains to show that $\f_0$ is admissible,  i.e.  there exists $\tau>0$ such that  $\|\f_0\|_{L^2}\le \rho$.

By definition of $W(\tau)$ and \eqref{eq:Wyz} we have
\begin{equation}\label{eq:norm}
\begin{aligned}
   \int_0^\tau\|\f_0(t)\|^2_2dt&=\int_0^\tau \|e^{-tA^\ast}W^{-1}(\tau)\y_0\|_2dt\\
   &= \int_0^\tau \left\langle e^{-tA}\cdot e^{-tA^\ast}W^{-1}(\tau)\y_0, W^{-1}(\tau)\y_0\right\rangle dt\\
    &=\langle \y_0, W^{-1}(\tau)\y_0\rangle\le \|\y_0\|_2\cdot\|W^{-1}(\tau)\y_0\|_2.
\end{aligned}
\end{equation}

To prove item (i) we look for the set of $\y_0\in\ell^2$ with $\|\y_0\|_2^2\le \kappa\rho^2$. Then by \eqref{eq:int=y_0} we have that $\y(\tau)=0$ for the solution started from $\y_0$. Also, by \eqref{eq:norm} and the choice of $\y_0$ the function $\f_0$ defined by \eqref{eq:contol} is admissible. 
 
To prove item (ii) we consider cases $\lambda<-1$ and $\lambda=-1$ separately. 

\textbf{Global null-controllability for $\lambda<-1$.} We will prove that $\|W^{-1}(\tau)\y_0\|_2\to 0$ as $\tau\to +\infty$.  To this end  we refine the inequality in \eqref{eq:weakes} as follows.

Since $e^{tA^\ast}$ is invertible,
\[
\|\y\|_2=\|e^{tA^\ast}e^{-tA^\ast}\y\|_2\le \|e^{tA^\ast}\|\cdot \|e^{-tA^\ast}\y\|.
\]
Thus, by Proposition \ref{prop:stability} we have
\[
 \|e^{-tA^\ast}\y\|_2\ge e^{-t(1+\lambda)}\|\y\|_2.
\]
Consequently, for any $\y\in \ell^2$ holds
\[
\langle W(\tau)\y, \y\rangle= \int_0^\tau \|e^{-tA^\ast}\y\|^2_2dt\ge \int_0^\tau e^{-2t(1+\lambda)}\|\y\|^2_2dt
= \|\y\|^2_2\cdot\frac{e^{-2(1+\lambda)\tau}-1}{-2(1+\lambda)}.
\]
Recalling  $\|W^{-1}(\tau)\|\le \kappa^{-1}$ and   letting $\z(\tau)=W(\tau)^{-1}\y_0\in\ell^2$ by the above inequality  we have
\[
\kappa^{-1}\|\y_0\|^2_2\ge \langle \y_0, W^{-1}(\tau)\y_0\rangle=\langle W(\tau)\z(\tau), \z(\tau)\rangle\ge \|\z(\tau)\|^2_2\cdot\frac{e^{-2(1+\lambda)\tau}-1}{-2(1+\lambda)}.
\]
Hence,
\begin{equation}\label{eq:||ztau||}
\|\z(\tau)\|_2\le \left(\frac{-2(1+\lambda)}{\kappa(e^{-2(1+\lambda)\tau}-1)}\right)^{1/2}\|\y_0\|_2.
\end{equation}
Since $\lambda<-1$ the right hand side of the above inequality converges to $0$ exponentially fast as $\tau\to +\infty$ and so does $\|\z(\tau)\|_2$. Thus, by \eqref{eq:norm} there exists $\tau_0$ such that 
\[
 \int_0^\tau\|\f_0(t)\|^2_2dt\le \rho^2 \quad\text{for all}\quad\tau>\tau_0.
\]
This finishes the proof of global controllability for $\lambda<-1$. 

\textbf{Global null-controllability for $\lambda=-1$.}
This case needs a slightly different argument. Recall that  that in this case the system is locally null controllable i.e.  the control function defined in \eqref{eq:contol} remains admissible in the neighbourhood of the origin: if $\|\y_0\|_2\le \rho\sqrt\kappa$, where $\kappa$ is the constant defined in \eqref{eq:kappa}, we set \begin{equation}\label{eq:contol1}
    \f_1(t)=-e^{-tA^\ast}\cdot W^{-1}(1)\y_0 \quad\text{for every}\quad \y_0\in \ell^2.
\end{equation}
Then  by \eqref{eq:norm} we get
\begin{equation*}
\int_0^\tau\|\f_1(t)\|^2_2dt\le \|\y_0\|_2^2\cdot \|W^{-1}(1)\| \le \rho^2,
\end{equation*}
and 
\[
\y(1)=e^{A}\y_0 + e^{A}\int_{0}^1e^{-sA}\f(s)ds=0.
\]
Further, by stability of the system \eqref{eq:syst1} for any $\y_0\in \ell^2$ there exists $\tau_0=\tau(\kappa, \rho, \y_0)$ such that $\|e^{tA}\y_0\|_2\le \rho\sqrt\kappa$ for any $t\ge \tau_0$. Therefore, we set 
\begin{equation}\label{eq:l=1}
\f_0(t)=\begin{cases}
0, \text{ if } t\le \tau_0,\\
\f_1(t), \tau_0\le t\le \tau_0+1. 
\end{cases}
\end{equation}
One can easily check that $\f_0$ is admissible and $\y(\tau_0+1)=0$ for the corresponding solution of \eqref{eq:controlsyst},  which finishes the proof.  This finishes the proof of item (ii)

Observe that to prove the item (iii) it is sufficient to obtain estimates on $\tau$ satisfying 
\[\|\y_0\|_2\cdot \|\z(\tau)\|_2\le \rho^2,\]
where $z(\tau)$ is given by \eqref{eq:||ztau||}, which is equivalent to 
\[
 \left(\frac{-2(1+\lambda)}{\kappa(e^{-2(1+\lambda)\tau}-1)}\right)^{1/2}\|\y_0\|_2^2\le \rho^2, 
\]
which is satisfied if 
\[
\tau\ge  \frac{\|\y_0\|^4_2}{\kappa\rho^4}\ge \frac{1}{2|\lambda+1|} \log\left(1+\frac{2|\lambda+1|}{\kappa}\frac{\|\y_0\|^4_2}{\rho^4}\right).
\]
This completes the proof of the Theorem.
\end{proof}

\section{Discussion of the results and further questions}\label{sec:disc}
In this paper we addressed an infinite system of linear ODEs with a special operator $A=\lambda \id+ E$ on the right hand side. We obtained stability and controllability of the system when $\lambda\le -1$. Initially, the main motivation for this choice was to construct an example whose finite dimensional projections having qualitatively different behavior than the system itself. In the proofs we used Gramian operators, which raised a natural question whether or not the constructed control  functions are optimal, since in the finite dimensional setting this method is known to produce optimal control. In the setting of the current paper,  when $\lambda<-1$ we expect to obtain optimal control. But we were unable to find an analogue of a general result in the spirit of (for example,  \cite[Propostion 2.]{Ibragimov-ScAs2013}), in the infinite dimensional  setting; when $\lambda=-1$  we don't control the system until it gets closer to the origin. Therefore,  we don't expect to obtain optimal control.  Notice that, in the proofs we used the special form of $A$. 
It would be interesting to obtain similar results for more general system 
\begin{equation}\label{eq:gen}
\dot{\y}=A\y+B\f, \quad\y(0)=\y_0,
\end{equation}
where  $A:\ell^2\to \ell^2$  is a bounded operator, and $B:\mathcal L\to \mathcal L$ is an operator from (possible finite dimensional) subspace $\mathcal L$ of $\ell^2$. The proofs suggest that if $B$ is identity and the spectrum of $A$ lies on the left hand side of the imaginary axes, then \eqref{eq:gen} is  globally asymptotically stable. Invertibility of the Gramians seems also to work since it is a perturbative argument.  But for the global null controllability, one needs different estimates to the inverses of the Gramians, or another approach is needed. However, for general 
$B$ the situation is unclear, it would be nice to obtain a similar conditions 
to the classical Kalman (See for example, \cite[Theorem 1.16]{Coron})  or 
an analog of Fattorini-Hautus but in both situations, it isn't clear what should be the exact conditions. Since for Kalman condition injectivity of an operator isn't sufficient for invertibility, and for Fattorini-Hautus usually one assumes countable spectrum with certain properties  (see for example \cite{BadTaka} and references therein).






\end{document}